\newtheorem{theo}{Theorem}[section]
\newtheorem{lem}[theo]{Lemma}
\newtheorem{prop}[theo]{Proposition}
\newtheorem{cor}[theo]{Corollary}
\begin{document}

\baselineskip=15pt

\title
[Spaces with all power-bounded operators almost periodic]
{Reflexive Banach spaces with all \\power-bounded operators almost periodic}

\author{Michael Lin}
\address{Department of Mathematics, Ben-Gurion University, Beer-Sheva, Israel}
\email{lin@math.bgu.ac.il}

\subjclass{Primary: 47A35, 46B20; Secondary: 47A16, 37A25}
\keywords{ stable operators, reflexive Banach spaces, almost periodic operators, 
indecomposable Banach spaces, scalar plus strictly singular operators, weak mixing}

\begin{abstract}
We analyze  the ergodic properties of power-bounded operators on a reflexive Banach
space of the form "scalar plus compact-power", and show that they are almost periodic
(all the orbits are conditionally compact).
If such an operator is weakly mixing, then it is stable
(its powers converge in the strong operator topology). 
Let $X_{ISP}$ be the separable reflexive indecomposable Banach space constructed 
by Argyros and Motakis, in which every operator has an invariant subspace.
We conclude that every power-bounded operator on a closed subspace of $X_{ISP}$
is almost periodic.

\end{abstract}
\maketitle

\section{Introduction}

Lorch, Kakutani, and Yosida proved (independently) that if $T$ is a power-bounded 
(linear) operator on a reflexive (real or complex) Banach space $X$, 
then $T$ is {\it mean ergodic}: for every $x \in X$ 
the averages $\frac1n\sum_{k=1}^n T^k x$ converge in norm (as $n \to \infty$); 
$T$ then induces on $X$ the {\it ergodic decomposition}
\begin{equation} \label{decomp}
X= F(T) \oplus \overline{(I-T)X},
\end{equation}
where $F(T):=\{y:\, Ty=y\}$ is the set of fixed points.
The limit $Ex := \lim_{n} \frac1n \sum_{k=1}^n T^k x$ satisfies $ET=E=TE$, and 
is the projection on $F(T)$ corresponding to (\ref{decomp}) -- see \cite[pp. 71-74]{Kr}.
Applying the above to $T^*$, it is easy to check that 
$E(T^*)x^*:= \lim_{n\to \infty} \frac1n \sum_{k=1}^n T^{*k} x^*$ satisfies
$E(T^*)=E^*$.
\smallskip

Fonf, Lin and Wojtaszczyk \cite{FLW} proved that if every power-bounded operator $T$
on a Banach space $X$ with a Schauder basis is mean ergodic,
then $X$ must be reflexive.
In \cite{FLW1} they show examples of non-reflexive Banach spaces with basis such that every
{\it contraction} is mean ergodic.
\smallskip

An operator $T$ on a Banach space is called {\it (weakly) almost periodic} if 
all its orbits $\{T^n x\}_{n\ge 0}$ are conditionally (weakly) compact; a weakly almost
periodic operator is power-bounded.  In fact,
Kakutani and Yosida state and prove that a weakly almost periodic operator is mean ergodic.

Obviously, all power-bounded operators on a reflexive space are weakly almost periodic. 
Conversely, if $X$ is a separable Banach space with basis such that every power-bounded 
operator is weakly almost periodic, then by \cite{FLW} $X$ is reflexive.
\smallskip

A mean ergodic $T$ is called {\it stable} if $\|T^n x -Ex\| \to 0$ for every $x \in X$ 
(i.e. $T^n$ converges to $E$ in the strong operator topology). 
A unitary operator on a Hilbert space is never stable (nor any non-identity isometry 
on a reflexive space); however, it is possible that $T^nx \to Ex$ weakly for every $x \in X$.
For $T$ induced on $L_2$ by an ergodic probability preserving transformation this stability is
called {\it mixing}. Extending the definition in ergodic theory, we will call $T$ on 
a reflexive Banach space {\it weakly mixing} if  for every $x \in X$ we have 
$\lim_n \frac1n \sum_{k=1}^n |\langle x^*,T^k(x-Ex)\rangle| =0$ for every $x^* \in X^*$;
equivalently, using reflexivity, for $x\in X$ there exists a sequence $\{n_i\}$ such that 
$T^{n_i}x \to Ex$ weakly, by \cite{JL1}. Note that we do not require that $F(T)$ be 
one-dimensional. If $T$ is almost periodic and weakly mixing, then it is stable
(since $TE=E$).

The isometry $T$ induced on $L_2[0,1]$ by an ergodic probability preserving transformation 
is "in general" weakly mixing \cite{Ha}, but not mixing \cite{Ro} (hence not almost periodic).
On the other hand, in finite-dimensional spaces weak mixing coincides with stability.
The  (invertible) isometry induced by Chacon's  explicit non-mixing weakly mixing transformation
\cite{Ch} on $L_p[0,1]$, $1<p<\infty$, is a weakly almost periodic operator which is not
almost periodic.
\medskip

The purpose of this note is to show the existence of  separable reflexive (real or complex)
infinite-dimensional Banach spaces on which every power-bounded operator is almost periodic
(so if it is weakly mixing it is in fact stable). The spaces we use are the subspaces of
the space $X_{ISP}$ constructed by Argyros and Motakis \cite{AM}, or certain subspaces of
the space $X_{\mathcal T}$ they constructed in \cite{AM2}; it is implied in the 
construction of $X_{ISP}$ (and explicit in part (vi) of the theorem in \cite[p. 1382]{AM}) 
that the field of scalars is $\mathbb R$, but Dr. Motakis has informed us 
that "with some adjustment of constants" the construction works also for complex scalars.
In the later construction of $X_{\mathcal T}$ by Argyros and Motakis \cite[Theorem A]{AM2}, 
they sepcify explicitly (on p. 629) that their construction "can also be carried out over 
the field of complex numbers".
\medskip

\section{Almost periodicity of "scalar plus compact-power" operators}

In the space $X_{ISP}$ constructed in \cite{AM}, every operator is of the form
$T=\alpha I +S$ with $S^3$ compact. 
In the space $X_{\mathcal T}$ constructed in \cite{AM2}, every operator is of the form
$T=\alpha I +S$ with $S^2$ compact. 
In this section we study operators, in real or complex Banach spaces,
 of the form $\alpha I+S$, with $S^k$ compact for some $k$.

\begin{lem} \label{spectrum}
Let $X$ be an infinite-dimensional complex Banach space and let $T$ be of the form 
$T=\alpha I+S$, where $\alpha \in \mathbb C$ and $S^k$ is compact for some $k \ge 1$. Then 

(i) $\sigma(T) =\alpha +\sigma(S)$.

(ii) $\sigma(T)$ is finite or countable, $\alpha \in \sigma(T)$, $\alpha$ is the only 
accumulation point of $\sigma(T)$, and  every $\lambda \ne \alpha$ in $\sigma(T)$ 
is an eigenvalue of $T$ of finite multiplicity.

(iii) If $T$ is power-bounded, then $|\alpha| \le 1$.
\end{lem}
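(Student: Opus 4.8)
The plan is to reduce everything to the spectrum of $S$ and to exploit that $S^k$ is compact. Part (i) needs no work: for $\lambda \in \C$ we have $\lambda I - T = (\lambda-\alpha)I - S$, so $\lambda \in \sigma(T)$ iff $(\lambda-\alpha)I - S$ is not invertible iff $\lambda - \alpha \in \sigma(S)$, i.e. $\sigma(T) = \alpha + \sigma(S)$. For (ii) the two ingredients are the polynomial spectral mapping theorem $\sigma(S^k) = \{\mu^k : \mu \in \sigma(S)\}$ and the Riesz--Schauder description of $\sigma(S^k)$: since $\dim X = \infty$, the set $\sigma(S^k)$ is finite or countable, contains $0$, has $0$ as its only possible accumulation point, and consists away from $0$ of eigenvalues of $S^k$ of finite multiplicity.

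From $\sigma(S^k) = \{\mu^k : \mu\in\sigma(S)\}$ I would transfer these facts to $\sigma(S)$. Countability is immediate, since each point of $\sigma(S^k)$ has at most $k$ preimages under $z\mapsto z^k$; and $0\in\sigma(S)$ because $0\in\sigma(S^k)$ means $\mu^k = 0$ for some $\mu\in\sigma(S)$, whence $\mu = 0$. If $\mu_n\to\mu$ with $\mu\ne 0$ and the $\mu_n$ distinct points of $\sigma(S)$, then, $z\mapsto z^k$ being locally injective near $\mu$, the points $\mu_n^k$ are eventually distinct and converge to $\mu^k\ne 0$, contradicting that $0$ is the only accumulation point of $\sigma(S^k)$; hence $0$ is the sole accumulation point of $\sigma(S)$, and every $\mu\ne 0$ in $\sigma(S)$ is isolated.

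The step I expect to be the real obstacle is showing that each isolated $\mu \ne 0$ in $\sigma(S)$ is an \emph{eigenvalue} of $S$ of finite multiplicity: the factorization $S^k - \mu^k I = \prod_{j=0}^{k-1}(S - \omega^j\mu I)$ with $\omega = e^{2\pi i/k}$ only shows that \emph{some} $k$-th root of $\mu^k$ lies in $\sigma(S)$, not that $\mu$ does. Instead I would pass to the Riesz spectral projection $P$ of $S$ at the isolated point $\mu$; then $P\ne 0$, $PX$ is a closed $S$-invariant subspace, and $\sigma(S|_{PX}) = \{\mu\}$, so by spectral mapping $\sigma\big((S|_{PX})^k\big) = \{\mu^k\}$. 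But $(S|_{PX})^k = S^k|_{PX}$ is compact, being the restriction of the compact operator $S^k$ to an invariant subspace, and a compact operator whose spectrum omits $0$ is invertible, forcing its underlying space to be finite-dimensional. Hence $PX$ is a nonzero finite-dimensional space on which $S|_{PX} - \mu I$ has spectrum $\{0\}$, so it is nilpotent and in particular not injective; thus $\mu$ is an eigenvalue of $S$, with eigenspace contained in $PX$, hence of finite multiplicity. Translating by $\alpha$ via (i) then gives all the assertions of (ii) for $T$, using $Tx = \lambda x \iff Sx = (\lambda-\alpha)x$.

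Part (iii) is then a one-line consequence: a power-bounded $T$ has spectral radius $r(T) = \lim_n\|T^n\|^{1/n} \le 1$, and since $\alpha\in\sigma(T)$ by (ii), this forces $|\alpha|\le 1$. The only place genuine analysis enters, rather than formal spectral bookkeeping, is the finite-dimensionality of $PX$ in the eigenvalue step, which is precisely where the compactness of $S^k$ does its work.
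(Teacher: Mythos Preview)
Your proof is correct. Parts (i) and (iii) match the paper verbatim. For (ii), however, your route differs from the paper's: the paper simply invokes \cite[Theorem VII.4.6, p.~577]{DS}, which is precisely the extension of Riesz--Schauder theory to operators $S$ with some power $S^k$ compact, and then translates by $\alpha$. You instead reduce to the classical Riesz--Schauder theorem for the compact operator $S^k$ itself and transfer the conclusions back to $S$ via the polynomial spectral mapping theorem and the Riesz projection argument for isolated spectral points. In effect you have reproved the cited Dunford--Schwartz result from first principles. The paper's approach is a one-line citation; yours is self-contained and shows exactly where compactness of $S^k$ is used (forcing $PX$ to be finite-dimensional), which is pedagogically clearer but of course longer. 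Both are entirely valid.
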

\begin{proof}
(i) is immediate from $\lambda I-T= (\lambda -\alpha)I -S$.

(ii) Since $S^k$ is compact, the properties follow from those of $S$, given by 
\cite[Theorem VII.4.6, p. 577]{DS}.

(iii) Power-boundedness yields $r(T) \le 1$, and since $\alpha \in \sigma(T)$
by (ii), $|\alpha| \le 1$.
\end{proof}


\begin{prop} \label{stability}
Let $T$ be a power-bounded operator on a complex Banach space $X$ and assume that  
$\sigma(T) \cap \mathbb T$ is countable.
 Then the following are equivalent for $x \in X$:

(i) There exists an increasing subsequence $\{n_j\}$ such that $T^{n_j}x \to 0$ weakly.

(ii) $\frac1n \sum_{k=1}^n |\langle x^*,T^k x \rangle| \to 0$ as $n\to \infty$, for every
$x^* \in X^*$.

(iii) $\|T^n x \| \to 0$.
\end{prop}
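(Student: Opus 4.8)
The plan is to route every implication through the discrete stability theorem of Arendt--Batty and Lyubich--V\~u: \emph{if $R$ is a power-bounded operator on a Banach space $Y$, $\sigma(R)\cap\mathbb T$ is countable, and $R^*$ has no eigenvalue on $\mathbb T$, then $\|R^ny\|\to 0$ for every $y\in Y$.} Since (iii)$\Rightarrow$(ii) and (iii)$\Rightarrow$(i) are immediate, it suffices to deduce (iii) from (i) and, by essentially the same argument, from (ii). First I would fix an $x$ satisfying (i) (or (ii)) and pass to the cyclic subspace $X_x:=\overline{\operatorname{span}}\{T^nx:n\ge 0\}$, a closed $T$-invariant subspace on which $x$ is a cyclic vector; put $T_x:=T|_{X_x}$, which is again power-bounded. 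The whole point is that (i) (resp.\ (ii)) refers only to $x$, so one should not hope for stability on all of $X$, but only on $X_x$.

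The first step is to check that $\sigma(T_x)\cap\mathbb T$ is countable; in fact $\sigma(T_x)\cap\mathbb T\subseteq\sigma(T)\cap\mathbb T$. Power-boundedness gives $\sigma(T)\subseteq\overline{\mathbb D}$, so the unbounded connected component $U$ of $\rho(T)$ contains $\mathbb C\setminus\overline{\mathbb D}$. For $y\in X_x$ the map $\lambda\mapsto(\lambda I-T)^{-1}y$ is analytic on $\rho(T)$ and, by the Neumann series, takes values in $X_x$ for $|\lambda|$ large; composing with the quotient map $X\to X/X_x$ and applying the identity theorem for vector-valued analytic functions shows it stays in $X_x$ throughout $U$. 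Hence $(\lambda I-T_x)$ is invertible for $\lambda\in U$, so $\sigma(T_x)$ can only contain $\sigma(T)$ together with bounded components of $\rho(T)$, all of which lie in the open disc; this gives the inclusion.

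The second step is the heart of the matter: $T_x^*$ has no eigenvalue on $\mathbb T$. If $T_x^*x^*=\lambda x^*$ with $x^*\ne 0$ and $|\lambda|=1$, then $\langle x^*,T^nx\rangle=\lambda^n\langle x^*,x\rangle$ for all $n\ge 0$; were $\langle x^*,x\rangle=0$, the functional $x^*$ would annihilate the whole orbit and hence all of $X_x$, contradicting $x^*\ne 0$. So $|\langle x^*,T^nx\rangle|=|\langle x^*,x\rangle|>0$ is independent of $n$, which contradicts (i) (no subsequence can tend to $0$) and equally contradicts (ii) (the Ces\`aro averages of a positive constant do not vanish). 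With this ruled out, the Arendt--Batty--Lyubich--V\~u theorem applies to $T_x$ on $X_x$ and yields $\|T^nx\|\to 0$, i.e.\ (iii); running the argument from (ii) as well closes the equivalences.

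I expect the only points needing care to be the spectral inclusion $\sigma(T_x)\cap\mathbb T\subseteq\sigma(T)\cap\mathbb T$ (this is where power-boundedness of $T$ is really used) and having the Arendt--Batty--Lyubich--V\~u stability theorem available as a citation; the passage to the cyclic subspace and the eigenfunctional computation are otherwise routine.
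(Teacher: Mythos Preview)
Your argument is correct, and the overall architecture matches the paper's: pass to a $T$-invariant subspace on which the Arendt--Batty/Lyubich--V\~u hypotheses are verified, then invoke that theorem. The differences are in the implementation. The paper first reduces to (ii)$\Rightarrow$(iii) by citing \cite{JL1} for (i)$\Rightarrow$(ii), and then works on the subspace
\[
Z=\Big\{z\in X:\ \tfrac1n\sum_{k=1}^n|\langle x^*,T^kz\rangle|\to 0\ \text{for all }x^*\in X^*\Big\},
\]
rather than on the cyclic subspace $X_x$. For the spectral inclusion $\sigma(T_{|Z})\cap\mathbb T\subset\sigma(T)\cap\mathbb T$ the paper uses an ergodic-theoretic shortcut: for $\lambda\in\rho(T)\cap\mathbb T$ one has $\big\|\tfrac1n\sum_{k=1}^n(\bar\lambda T)^k\big\|\to 0$, hence the same for the restriction, which (via the uniform ergodic theorem) forces $\lambda\in\rho(T_{|Z})$. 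Your route---analytic continuation of $\lambda\mapsto(\lambda I-T)^{-1}y$ through the unbounded component of $\rho(T)$---is equally valid and has the advantage of applying to \emph{any} closed invariant subspace; it also lets you treat (i) and (ii) in one stroke without appealing to \cite{JL1}. The paper's version is shorter and stays within the ergodic toolkit, while yours is more self-contained at the cost of the connectedness/identity-theorem detour. Both land on exactly the same endgame: no unimodular eigenvalues for the adjoint of the restriction, then Arendt--Batty.
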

\begin{proof}
For any power-bounded $T$,
(i) implies (ii) by the Proposition in \cite{JL1} (which is valid for real or 
complex Banach spaces). Since (iii) obviously implies (i), we have to prove only 
that (ii) implies (iii), under the assumption on $T$.
It is easy to show that 
$$
Z:= \{z \in X:\,  \frac1n\sum_{k=1}^n |\langle x^*,T^k z\rangle | \to 0  \quad
\text{\rm for every }\  x^* \in X^* \}
$$
is a closed subspace of $X$, invariant under $T$,  and $x \in Z$ by (ii).
We show that $\sigma(T_{|Z}) \cap \mathbb T$ is countable. Let $\lambda \in \mathbb T$
with $\lambda I -T$ invertible. Then $\|\frac1n\sum_{k=1}^n (\bar\lambda T)^k\| \to 0$,
which yields that   $\|\frac1n\sum_{k=1}^n (\bar\lambda T_{|Z})^k\| \to 0$; hence
$\lambda I_Z -T_{|Z}$ is invertible on $Z$. Thus $\sigma(T_{|Z}) \cap \mathbb T \subset
\sigma(T) \cap \mathbb T$ is countable.  We can therefore assume $X=Z$.
Hence for $|\lambda|=1$, if $T^*y^* =\lambda y^*$, then $\langle y^*,z\rangle =0$
for every $z \in X$, so $y^*=0$. Thus $T^*$ has no unimodular eigenvalues.

Power-boundedness yields $r(T) \le 1$. If $r(T) <1$, then $\|T^n\| \to 0$ and (iii) holds.

Assume now $r(T)=1$. Since $\sigma(T) \cap \mathbb T$ 
is at most countably infinite and $T^*$ has no unimodular eigenvalues (we have reduced 
the proof to $X=Z$), we can apply Theorem 5.1 of Arendt and Batty \cite{AB} (see also \cite{LV}), 
to conclude that $T^n \to 0$ (on $Z$) in the strong operator topology, so $x$ satisfies (iii).
\end{proof}




\begin{theo} \label{real-stability}
Let $T$ be a power-bounded operator on a real or complex Banach space $X$ and assume that 
$T$ is of the form $T=\alpha I+S$, with $\alpha$ a scalar and $S^k$ compact for some 
$k \ge 1$. Then the following are equivalent for $x \in X$:

(i) There exists an increasing subsequence $\{n_j\}$ such that $T^{n_j}x \to 0$ weakly.

(ii) $\frac1n \sum_{k=1}^n |\langle x^*,T^k x \rangle| \to 0$ as $n\to \infty$, for every
$x^* \in X^*$.

(iii) $\|T^n x \| \to 0$.
\end{theo}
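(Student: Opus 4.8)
The plan is to reduce the real case to the complex case already handled in Proposition \ref{stability}, by complexification. First I would observe that the implications (iii) $\Rightarrow$ (i) and (i) $\Rightarrow$ (ii) are immediate or already available: (iii) trivially gives (i), and (i) $\Rightarrow$ (ii) follows from the Proposition in \cite{JL1}, which the excerpt notes is valid over $\mathbb R$ as well as $\mathbb C$. So the only thing to prove is (ii) $\Rightarrow$ (iii), and only the real case needs new work, since for complex $X$ we can invoke Proposition \ref{stability} once we know $\sigma(T) \cap \mathbb T$ is countable — and that is exactly what Lemma \ref{spectrum}(ii) gives, because $\sigma(T)$ itself is at most countable when $S^k$ is compact.

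For the real case, let $X_{\mathbb C}$ be the complexification of $X$ and $T_{\mathbb C} = \alpha I + S_{\mathbb C}$ the complexification of $T$; then $T_{\mathbb C}$ is power-bounded (with the same bound, up to the usual constant depending on the choice of norm on $X_{\mathbb C}$), $\alpha$ is real, and $(S_{\mathbb C})^k = (S^k)_{\mathbb C}$ is compact because the complexification of a compact operator is compact. Hence $T_{\mathbb C}$ satisfies the hypotheses of Proposition \ref{stability}. The key point is then to transfer statement (ii) from $x \in X$ to the vector $x$ viewed inside $X_{\mathbb C}$: if $\frac1n \sum_{k=1}^n |\langle x^*, T^k x\rangle| \to 0$ for every $x^* \in X^*$, I want to conclude $\frac1n \sum_{k=1}^n |\langle z^*, T_{\mathbb C}^k x\rangle| \to 0$ for every $z^* \in (X_{\mathbb C})^*$. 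This follows by writing a general functional on $X_{\mathbb C}$ as $z^* = x^* + i y^*$ with $x^*, y^* \in X^*$, using $T_{\mathbb C}^k x = T^k x$ (since $x$ is a ``real'' vector and $T_{\mathbb C}$ restricts to $T$ on the real part), so that $|\langle z^*, T_{\mathbb C}^k x\rangle| \le |\langle x^*, T^k x\rangle| + |\langle y^*, T^k x\rangle|$, and each Cesàro average on the right tends to $0$ by hypothesis. Thus $x$ satisfies condition (ii) of Proposition \ref{stability} for $T_{\mathbb C}$, so $\|T_{\mathbb C}^n x\| \to 0$, and since the norm on $X_{\mathbb C}$ restricts (equivalently) to the norm on $X$ and $T_{\mathbb C}^n x = T^n x$, we get $\|T^n x\| \to 0$, which is (iii).

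I would then record that the chain (i) $\Rightarrow$ (ii) $\Rightarrow$ (iii) $\Rightarrow$ (i) closes the equivalence in both the real and complex cases. The complex case needs only the remark that Lemma \ref{spectrum}(ii) supplies the countability of $\sigma(T) \cap \mathbb T$ needed to apply Proposition \ref{stability} directly, with no complexification required.

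The main obstacle I anticipate is purely bookkeeping around complexification: one must be careful that the chosen norm on $X_{\mathbb C}$ keeps $T_{\mathbb C}$ power-bounded (any of the standard reasonable complexification norms works, at the cost of an absolute constant, which is harmless here) and that ``$x$ real'' genuinely forces $T_{\mathbb C}^k x = T^k x$ with no stray imaginary parts — both routine but worth stating cleanly. No genuinely hard analytic step arises beyond what Proposition \ref{stability} and Lemma \ref{spectrum} already provide.
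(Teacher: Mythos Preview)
Your proposal is correct and follows essentially the same approach as the paper: reduce to (ii) $\Rightarrow$ (iii), handle the complex case via Lemma \ref{spectrum}(ii) plus Proposition \ref{stability}, and in the real case complexify $T$ to $T_{\mathbb C}=\alpha I+S_{\mathbb C}$ with $(S_{\mathbb C})^k$ compact, split a functional on $X_{\mathbb C}$ into real and imaginary parts to transfer (ii) to $(x,0)$, and apply Proposition \ref{stability}. The only cosmetic difference is that the paper fixes the Taylor norm on $X_{\mathbb C}$ (so that $\|(T_{\mathbb C})^n\|=\|T^n\|$ exactly, via \cite{MST}), whereas you allow any reasonable complexification norm up to an absolute constant --- harmless, as you note.
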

\begin{proof} When $X$ is over $\mathbb C$, the theorem follows from Proposition \ref{stability},
since by Lemma \ref{spectrum}(ii) $T$ has countable spectrum.

Now let $X$ be over $\mathbb R$. As noted in the proof of Proposition \ref{stability}, 
we have to prove (also in the real case), only that (ii) implies (iii).

Let $X_{\mathbb C}=X \oplus X$ be the usual complexification of $X$ \cite[Section 77]{Ha2}, 
and let $T_{\mathbb C}(x,y):=(Tx,Ty)$ be the complexification of $T$.  
On $X_\mathbb C$ we define the  norm  (attributed to Taylor in \cite[Theorem 2]{ MW}; 
see \cite[Proposition 3 and formula (1)]{MST})
\begin{equation}\label{complex-norm}
\|(x,y)\|_T:= \sup _{0\le t \le 2\pi}\|x\cos t -y\sin t\| = 
\sup_{\phi \in X^*, \, \|\phi\| \le 1} \sqrt{\phi(x)^2 +\phi(y)^2}.
\end{equation}
Note that all norms on $X_\mathbb C$ which satisfy $\|(x,0)\|=\|x\|$ and $\|(x,-y)\|=\|(x,y)\|$
are equivalent, and satisfy $\|(x,y)\|_T \le \|(x,y)\| \le 2\|(x,y)\|_T$ \cite[Proposition 3]{MST}.
In the sequel we write $\|(x,y)\|$ for $\|(x,y)\|_T$. Clearly 
$$
\max\{\|x\|,\|y\|\} \le \|(x,y)\| \le \sqrt{\|x\|^2 + \|y\|^2} \le  \|x\| +\|y\|.
$$

By \cite[Proposition 4]{MST} $\|(T_\mathbb C)^n\| = \|(T^n)_\mathbb C\| = \|T^n\|$, so
$T_{\mathbb C}$ is power-bounded.
With the customary abuse of notation, we have 
$$
T_{\mathbb C}(x+iy)=Tx +iTy =\alpha x +Sx+i(\alpha y+Sy)=
(\alpha I_{X_{\mathbb C}}+S_{\mathbb C})(x+iy) \ \ x,y \in X.
$$ 
Since the complexification of $S$ satisfies $(S_{\mathbb C})^k = (S^k)_{\mathbb C}$,
it follows easily from (\ref{complex-norm}) that $(S_{\mathbb C})^k$ is compact.  Hence 
$T_{\mathbb C}$ has countable spectrum, by Lemma \ref{spectrum}(ii).

Let $x \in X$ satisfy (ii). For $\phi \in (X_{\mathbb C})^*$ put 
$x^*(y)=Re\langle \phi,y+i0\rangle$ and $y^*(y)=Im\langle \phi,y+i0\rangle$. Then
$x^*, y^* \in X^*$, and (ii) yields
$$
\frac1n\sum_{k=1}^n |\langle \phi,T_{\mathbb C}^k (x,0)\rangle| \le
\frac1n\sum_{k=1}^n |\langle x^* ,T^k x\rangle| + \frac1n\sum_{k=1}^n |\langle y^* ,T^k x\rangle| 
\underset{n\to\infty}\to 0.
$$
Hence $(x,0) \in X_{\mathbb C}$ satisfies (ii)  with respect to $T_{\mathbb C}$;
Proposition \ref{stability} yields
$\|T^n x\| =\|(T_{\mathbb C})^n(x,0)\| \to 0$, 
\end{proof}

{\bf Remark.} Argyros and Haydon \cite{AH} constructed a real Banach space  $X_K$
on which every operator is of the form $\alpha I +S$ with $S$ compact. Theorem 
\ref{real-stability} applies to any power-bounded operator on $X_K$.

\begin{prop} \label{dual}
Let $T$ be power-bounded on a reflexive (real or complex) Banach space. Then
$T$ is weakly mixing if and only if $T^*$ is weakly mixing.
\end{prop}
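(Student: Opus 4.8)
The plan is to exploit the characterization of weak mixing in terms of the averages of absolute values of the coefficient functionals, together with the ergodic decomposition and the relation $E(T^*)=E^*$ recorded in the introduction. Write $P=I-E$, so that on a reflexive space $X=F(T)\oplus \overline{(I-T)X}$ and $P$ is the projection onto $\overline{(I-T)X}$ along $F(T)$, commuting with $T$; dually $P^*=I-E^*$ is the projection onto $F(T^*)^{\perp}=\overline{(I-T^*)X^*}$, and $P^*$ commutes with $T^*$. By definition $T$ is weakly mixing iff $\frac1n\sum_{k=1}^n|\langle x^*,T^k Px\rangle|\to 0$ for all $x\in X$, $x^*\in X^*$.

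First I would rewrite the weak mixing condition for $T$ symmetrically in $X$ and $X^*$. Using $\langle x^*,T^k Px\rangle=\langle T^{*k}x^*, Px\rangle=\langle P^*T^{*k}x^*,x\rangle=\langle T^{*k}P^*x^*,x\rangle$ (the last step since $T^*$ commutes with $P^*$, and $\langle P^*x^*,Px\rangle=\langle x^*,PPx\rangle=\langle x^*,Px\rangle$), one sees that
\[
\frac1n\sum_{k=1}^n\bigl|\langle x^*,T^k Px\rangle\bigr|
=\frac1n\sum_{k=1}^n\bigl|\langle T^{*k}P^*x^*, x\rangle\bigr|.
\]
So $T$ is weakly mixing iff for every $x\in X$ and every $x^*\in X^*$ the Cesàro averages of $|\langle T^{*k}P^*x^*,x\rangle|$ tend to $0$; and $T^*$ is weakly mixing iff for every $x^*\in X^*$ and every $x^{**}\in X^{**}=X$ the Cesàro averages of $|\langle T^{*k}P^*x^*,x^{**}\rangle|$ tend to $0$. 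Since $X$ is reflexive, $X^{**}=X$ and these two statements are literally the same; quantifying over all $x\in X$ and all $x^*\in X^*$ gives the equivalence in both directions.

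Alternatively — and this may be the cleanest way to present it — I would invoke the sequential characterization already noted in the introduction (via \cite{JL1}): on a reflexive space, $T$ is weakly mixing iff for every $x\in X$ there is a subsequence with $T^{n_i}(x-Ex)\to 0$ weakly. By power-boundedness and reflexivity, every orbit of $P=I-E$ is weakly conditionally compact, so one can pass to a further subsequence along which $T^{n_i}P^*x^*$ also converges weakly in $X^*$; a diagonal/Helly argument then transfers weak-null subsequences between $T$ and $T^*$ on the complemented pieces $PX$, $P^*X^*$. The main obstacle is bookkeeping: one must be careful that the subsequences can be chosen \emph{uniformly} enough — i.e., that ``for every $x$ there is a subsequence'' on one side yields ``for every $x^*$ there is a subsequence'' on the other. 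This is handled by noting that the set of vectors for which such a null subsequence exists is actually a closed subspace (as in the definition of $Z$ in the proof of Proposition \ref{stability}), so it suffices to check the condition on a dense set or, in fact, it is automatically all of $PX$ once $T$ is weakly mixing; the symmetry of the bilinear pairing under $\langle T^{*k}P^*\cdot,\cdot\rangle$ then closes the argument with no further work.
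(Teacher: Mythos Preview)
Your first approach is correct and is exactly the paper's proof: the one-line identity
\[
\frac1n\sum_{k=1}^n\bigl|\langle x^*,T^k(I-E)x\rangle\bigr|
=\frac1n\sum_{k=1}^n\bigl|\langle T^{*k}(I-E(T^*))x^*, x\rangle\bigr|,
\]
obtained from $E(T^*)=E^*$ and $(T^*)^*=T$, together with $X^{**}=X$, gives the equivalence immediately. Your write-up is just a slightly more verbose version of this.

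The ``alternative'' via the subsequence characterization should be deleted. It adds nothing, and your own caveat about ``bookkeeping'' is well-founded: from ``for every $x$ there exists $(n_i)$ with $T^{n_i}Px\to 0$ weakly'' you do \emph{not} directly get ``for every $x^*$ there exists $(m_j)$ with $T^{*m_j}P^*x^*\to 0$ weak-$*$'' by a diagonal/Helly argument, because the subsequence depends on $x$. The way you close the gap (``the symmetry of the bilinear pairing \ldots closes the argument with no further work'') is in fact just invoking the Ces\`aro-absolute-value identity again, i.e.\ you are falling back on your first argument. So keep the first paragraph and drop the second.
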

\begin{proof} Since $(T^*)^*=T$ by reflexivity, we have
$$
\lim_n \frac1n \sum_{k=1}^n |\langle T^{*k}(I-E(T^*))x^*,x)\rangle| =
\lim_n \frac1n \sum_{k=1}^n |\langle x^*,T^k(x-Ex)\rangle| $$ 
for every $x^* \in X^*$ and $x \in X$, which proves the assertion.
\end{proof}

\begin{theo} \label{stability1}
Let $X$ be a reflexive real or complex Banach space, and let $T$ be a
weakly mixing power-bounded operator. If for some scalar $\alpha$ we have
$T=\alpha I +S$ with $S^k$ compact for some $k$, then $T$ and $T^*$ are stable.
\end{theo}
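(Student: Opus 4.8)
The plan is to obtain this as a quick consequence of Theorem \ref{real-stability}, together with Proposition \ref{dual} for the adjoint. First I would use that $X$ is reflexive and $T$ power-bounded to invoke mean ergodicity: let $E$ be the ergodic projection onto $F(T)$, so that $TE=E$ and hence $T^nEx=Ex$ for every $n\ge 0$ and every $x\in X$. Then I would fix $x\in X$ and set $z:=x-Ex$. By the very definition of weak mixing, $\frac1n\sum_{k=1}^n|\langle x^*,T^kz\rangle|\to 0$ for every $x^*\in X^*$; that is, $z$ satisfies condition (ii) of Theorem \ref{real-stability}. Since $T=\alpha I+S$ with $S^k$ compact, that theorem applies and delivers condition (iii) for $z$, i.e. $\|T^nz\|\to 0$. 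Because $T^nz=T^nx-T^nEx=T^nx-Ex$, this gives $\|T^nx-Ex\|\to 0$; as $x\in X$ was arbitrary, $T$ is stable.

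For $T^*$ I would first check that the hypotheses transfer: $X^*$ is again reflexive, $T^*$ is power-bounded with $\|T^{*n}\|=\|T^n\|$, and $T^*=\alpha I_{X^*}+S^*$ with $(S^*)^k=(S^k)^*$ compact by Schauder's theorem. By Proposition \ref{dual}, $T^*$ is weakly mixing. Applying the argument of the previous paragraph to $T^*$ on $X^*$ in place of $T$ on $X$ then shows that $T^*$ is stable (with ergodic projection $E(T^*)=E^*$), which completes the proof.

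I do not expect a genuine obstacle here: the substantive work has already been carried out in Theorem \ref{real-stability} (and Proposition \ref{stability}), and the one key observation is simply that the weak mixing hypothesis is \emph{exactly} the statement that $x-Ex$ satisfies condition (ii) for every $x\in X$. The only items needing (routine) verification are the ones that let the argument be reapplied to the adjoint — power-boundedness of $T^*$, compactness of $(S^*)^k$, and the transfer of weak mixing via Proposition \ref{dual} — and these are the easiest part of the argument rather than a real difficulty.
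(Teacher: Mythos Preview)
Your proposal is correct and follows essentially the same approach as the paper: both reduce stability of $T$ to Theorem \ref{real-stability} applied to $x-Ex$, and then handle $T^*$ by observing $(S^*)^k=(S^k)^*$ is compact and invoking Proposition \ref{dual}. The only cosmetic difference is that the paper passes to the invariant subspace $\overline{(I-T)X}=(I-E)X$ and applies Theorem \ref{real-stability} there, whereas you apply it directly on $X$ to the individual vector $z=x-Ex$; your version is arguably slightly cleaner since it avoids checking that the restriction of $T$ to $(I-E)X$ retains the form $\alpha I+S'$ with $(S')^k$ compact.
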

\begin{proof} Let $E$ be the ergodic projection on $F(T)$ defined in the introduction, so for
$x \in \overline{(I-T)X}$ we have $Ex=0$. We may restrict ourselves to the invariant 
subspace $(I-E)X$, so we assume that $X =  \overline{(I-T)X}$ (and then $E=0$).  
The weak mixing then means that 
$$
\lim_n \frac1n \sum_{k=1}^n |\langle x^*,T^k x\rangle| =0 \quad 
\text{\rm for every }\ x^* \in X^* \ \text{\rm and } \ x\in X.
$$
By Theorem \ref{real-stability},
$\|T^nx\| \to 0$ for $x \in\overline{(I-T)X}$; hence $\|T^nx-Ex\| \to 0$ for $x \in X$.
\smallskip

We have thus obtained that $T$ is stable. Since $S^{*k} =(S^k)^*$ is also compact, 
and $T^*$ is weakly mixing by Proposition \ref{dual}, we can apply the above result to $T^*$
and obtain that $T^*$ is stable.
\end{proof}

{\bf Remarks.} 1. If $T$ is power-bounded on a reflexive complex Banach space $X$, then
$x \in X$ satisfies 
$ \lim_n \frac1n \sum_{k=1}^n |\langle x^*,T^k x\rangle| =0 \quad 
\text{\rm for every }\ x^* \in X^*$ if and only if $\langle y^*,x\rangle =0$ for every
eigenvector $T^*y^*=\lambda y^*$  with $|\lambda|=1$ \cite{JL2}.

2. Since $E(T^*)=E^*$ (when $X$ is reflexive), we have $F(T)\ne \{0\}$ if and only if 
$F(T^*) \ne\{0\}$. When $X$ is over $\mathbb C$, we can apply this to $\lambda T$, 
$|\lambda|=1$, and obtain that $T$ and $T^*$ have the same unimodular eigenvalues.

3. The previous remarks yield that $T$ power-bounded on a reflexive complex Banach space $X$
is weakly mixing if and only if it has no unimodular eigenvalues different from 1; see
\cite[Theorem 9]{JL2}.

\begin{prop} \label{complex-WAP}
Let $T$ be a weakly almost periodic operator on a complex Banach space $X$. If
$\sigma(T) \cap \mathbb T$ is countable, then $T$ is almost periodic.
\end{prop}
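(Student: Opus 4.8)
The plan is to invoke the Jacobs--de Leeuw--Glicksberg decomposition attached to the weakly almost periodic operator $T$ and to treat its two summands separately: the ``reversible'' summand by an elementary compactness argument, and the ``flight'' summand by quoting Proposition~\ref{stability}. Since $T$ is weakly almost periodic it is power-bounded, every orbit $\{T^n x\}_{n\ge 0}$ is relatively weakly compact, and the closure $\mathcal{S}$ of $\{T^n:n\ge 0\}$ in the weak operator topology is a compact abelian semitopological semigroup whose minimal ideal is a compact group with unit a bounded idempotent $Q$ commuting with $T$. One obtains the $T$-invariant topological direct sum $X = X_r \oplus X_{fl}$, where $X_r = QX$ is the closed linear span of the unimodular eigenvectors of $T$ and $X_{fl} = (I-Q)X = \{x \in X : 0 \in \overline{\{T^n x : n \ge 0\}}^{\,w}\}$ (see \cite{Kr}).

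On $X_r$ I would show every orbit is relatively \emph{norm}-compact, so that $T_{|X_r}$ is almost periodic; this step does not use the countability hypothesis. If $x = \sum_{i=1}^m c_i x_i$ with $Tx_i = \lambda_i x_i$, $|\lambda_i| = 1$, then $T^n x = \sum_{i=1}^m c_i \lambda_i^n x_i$ lies in the image of the compact set $\overline{\{(\lambda_1^n,\dots,\lambda_m^n):n\ge 0\}} \subseteq \mathbb{T}^m$ under the continuous map $(\mu_1,\dots,\mu_m)\mapsto \sum_{i} c_i\mu_i x_i$, so its orbit is relatively norm-compact. Such finite linear combinations are dense in $X_r$, and since $T$ is power-bounded, an orbit that is uniformly approximable by relatively norm-compact orbits is totally bounded, hence relatively norm-compact in the complete space $X$.

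On $X_{fl}$ I would show $\|T^n x\| \to 0$ for every $x$. The orbit $\{T^n x : n\ge 0\}$ is relatively weakly compact and has $0$ in its weak closure, so by the Eberlein--\v{S}mulian theorem there is an increasing sequence $\{n_j\}$ with $T^{n_j}x \to 0$ weakly (the case in which the orbit actually reaches $0$ being trivial). Since $T$ is power-bounded and $\sigma(T)\cap\mathbb{T}$ is countable, Proposition~\ref{stability} applies to this $x$: condition (i) holds, hence (iii), i.e.\ $\|T^n x\| \to 0$. Consequently, for an arbitrary $y = y_r + y_{fl} \in X$ the orbit $\{T^n y\}$ is the sum of a relatively norm-compact orbit and a norm-null sequence, hence relatively norm-compact; thus $T$ is almost periodic.

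The work is mostly organizational rather than conceptual. The point where care is needed is the interface with Proposition~\ref{stability}: one must pin down that $X_{fl}$ really is the set of vectors with $0$ in the weak orbit-closure, so that the countability assumption enters precisely there, and one must check that ``$0$ in the weak closure of a relatively weakly compact orbit'' delivers a genuine weakly null subsequence. Note that the countability of $\sigma(T)\cap\mathbb{T}$ is used \emph{only} on the flight part; without it the flight part need not be stable and $T$ need not be almost periodic, as Chacon's example shows.
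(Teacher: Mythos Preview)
Your proof is correct and follows essentially the same route as the paper: invoke the Jacobs--de Leeuw--Glicksberg splitting and apply Proposition~\ref{stability} to the flight part to upgrade it to a norm-null part. The only cosmetic difference is that the paper identifies $X_{fl}$ with the set satisfying condition (ii) of Proposition~\ref{stability} (via \cite[Theorem~2]{JL2}) rather than passing through Eberlein--\v{S}mulian to get condition (i), and it takes the almost periodicity of the reversible summand as known rather than writing out the finite-combination argument you give.
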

\begin{proof}
Since $T$ is weakly almost periodic, by the deLeeuw-Glicksberg theorem 
\cite[Theorem 4.11]{DG} it induces the decomposition
$$
X=\text{clm}\{y \in X: Ty= \lambda y \text{ for some } \lambda \in \mathbb T\}\oplus X_0,
$$
where $X_0 := \{z \in X: 0 \text{ is a weak cluster point of } \{T^n z\}_{n\ge 0}\}$.
Since $T$ is weakly almost periodic,   \cite[Theorem 2]{JL2} yields
$$
X_0=
\{z\in X: \frac1n\sum_{k=1}^n |\langle x^*,T^k z\rangle | \to 0 \ \forall\,  x^* \in X^*\}.
$$
By the assumption on $\sigma(T)$, Proposition \ref{stability} yields the decomposition
\begin{equation} \label{jdg}
X=\text{clm}\{y \in X:\, Ty=
\lambda y\ \text{ for some }\ \lambda \in \mathbb T\}\oplus
\{z \in X:\,  \|T^n z\| \to 0\},
\end{equation}
which implies almost periodicity of $T$.
\end{proof}

\begin{cor}
Let $T$ be a power-bounded operator on a reflexive complex Banach space $X$. 
If $\sigma(T) \cap \mathbb T$ is countable, then $T$ is almost periodic.
\end{cor}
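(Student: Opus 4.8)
The plan is to obtain this as an immediate specialization of Proposition \ref{complex-WAP}. The one thing to supply is that, on a reflexive Banach space, \emph{every} power-bounded operator is weakly almost periodic: since $X$ is reflexive, bounded subsets of $X$ are relatively weakly compact, and as $T$ is power-bounded each orbit $\{T^n x\}_{n\ge 0}$ is bounded, hence conditionally weakly compact. This is exactly the definition of weak almost periodicity, and it was already noted in the introduction (``all power-bounded operators on a reflexive space are weakly almost periodic'').

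With that in hand the proof is one line: $T$ is weakly almost periodic on the complex Banach space $X$, and by hypothesis $\sigma(T)\cap\mathbb T$ is countable, so Proposition \ref{complex-WAP} applies and gives that $T$ is almost periodic. Equivalently, one may quote the de Leeuw--Glicksberg decomposition together with Proposition \ref{stability} directly, i.e.\ invoke the splitting \eqref{jdg}: the reversible part is the closed linear span of the unimodular eigenvectors, on which $T$ is almost periodic trivially, while on the complementary part $\|T^n z\|\to 0$; conditional compactness of every orbit follows.

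There is essentially no obstacle: the corollary merely records that the ``weakly almost periodic'' hypothesis of Proposition \ref{complex-WAP} is automatic in the reflexive case, so the genuine content is entirely in Proposition \ref{complex-WAP} (and hence in Proposition \ref{stability} and the Arendt--Batty theorem). I would keep the write-up to the two sentences above. It is worth noting for the reader that, combined with Lemma \ref{spectrum}(ii), this corollary is what yields the conclusion for subspaces of $X_{ISP}$ and $X_{\mathcal T}$, where every operator is of the form $\alpha I+S$ with $S^k$ compact and therefore has countable spectrum; but that application lies beyond the statement proved here.
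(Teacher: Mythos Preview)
Your argument is correct and matches the paper's intended approach: the corollary is stated without proof precisely because reflexivity makes every power-bounded operator weakly almost periodic, after which Proposition~\ref{complex-WAP} applies directly. The additional commentary you give (the alternative route via \eqref{jdg} and the link to Lemma~\ref{spectrum}(ii)) is accurate but not needed for the proof itself.
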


{\bf Remark.} For any power-bounded $T$ on a complex Banach space $X$ (not assumed 
reflexive), the decomposition (\ref{jdg}) is equivalent to almost periodicity of $T$,
by the Jacobs-deLeeuw-Glicksberg decomposition induced by $T$ (\cite[pp. 105-106]{Kr}). 
Jamison \cite[Theorem 3.2]{Ja} proved directly the special case that when $T$ is 
almost periodic, $T$ is stable if and only if it has no unimodular eigenvalues except 1.

\begin{theo} \label{AP}
Let $X$ be a reflexive real or complex Banach space, and let $T$ be a
power-bounded operator. If for some scalar $\alpha$ we have
$T=\alpha I +S$ with $S^k$ compact for some $k$, then $T$ and $T^*$ are almost periodic.
\end{theo}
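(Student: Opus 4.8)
The plan is to reduce the theorem to the complex, power-bounded, countable-peripheral-spectrum case already settled in Proposition~\ref{complex-WAP}. First suppose $X$ is complex. Since $X$ is reflexive, every power-bounded operator on $X$ is weakly almost periodic, so $T$ is; and by Lemma~\ref{spectrum}(ii) the spectrum $\sigma(T)$ is finite or countable, hence $\sigma(T)\cap\mathbb T$ is countable. Proposition~\ref{complex-WAP} then gives that $T$ is almost periodic. For $T^*$ I would observe that $X^*$ is again reflexive, $T^*$ is power-bounded, and $T^*=\alpha I+S^*$ with $(S^*)^k=(S^k)^*$ compact; applying the complex case just proved to $T^*$ on $X^*$ yields that $T^*$ is almost periodic. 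This disposes of the complex case.

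For the real case I would complexify exactly as in the proof of Theorem~\ref{real-stability}: take $X_{\mathbb C}=X\oplus X$ with the Taylor norm~(\ref{complex-norm}) and $T_{\mathbb C}(x,y)=(Tx,Ty)$. As recorded there, $T_{\mathbb C}$ is power-bounded, $T_{\mathbb C}=\alpha I_{X_{\mathbb C}}+S_{\mathbb C}$, and $(S_{\mathbb C})^k=(S^k)_{\mathbb C}$ is compact. Moreover $X_{\mathbb C}$ is reflexive: the Taylor norm is equivalent to the $\ell^2$-product norm on $X\oplus X$, so $X_{\mathbb C}$ is, as a real Banach space, isomorphic to $X\oplus_2 X$, which is reflexive since $X$ is, and complex reflexivity coincides with reflexivity of the underlying real space. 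Hence the complex case applies to $T_{\mathbb C}$, which is therefore almost periodic on $X_{\mathbb C}$.

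It then remains only to push conditional compactness of orbits back down to $X$. For $x\in X$ the $T_{\mathbb C}$-orbit of $(x,0)$ is $\{(T^n x,0)\}_{n\ge0}$, which is conditionally compact in $X_{\mathbb C}$; since $y\mapsto(y,0)$ is an isometric embedding of $X$ onto a closed subspace of $X_{\mathbb C}$, the orbit $\{T^n x\}_{n\ge0}$ is conditionally compact in $X$, so $T$ is almost periodic. The same argument applied to the real power-bounded operator $T^*=\alpha I+S^*$ on the reflexive space $X^*$ (or, equivalently, to $(T^*)_{\mathbb C}=(T_{\mathbb C})^*$ on $(X^*)_{\mathbb C}=(X_{\mathbb C})^*$) gives that $T^*$ is almost periodic.

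I expect no real obstacle here: the only points that need a word of justification are the stability of reflexivity and of the property ``$S^k$ compact'' under complexification --- both already used in Theorem~\ref{real-stability} --- and the trivial remark that conditional compactness of a set in $X_{\mathbb C}$ restricts to its trace on the closed subspace $X$.
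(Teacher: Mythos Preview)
Your proposal is correct and follows essentially the same route as the paper: reduce to Proposition~\ref{complex-WAP} via Lemma~\ref{spectrum}(ii) in the complex case, complexify in the real case, and handle $T^*$ by noting $T^*=\alpha I+S^*$ with $(S^*)^k$ compact. You add two details the paper leaves implicit --- the reflexivity of $X_{\mathbb C}$ and the descent of conditional compactness from $X_{\mathbb C}$ to $X$ --- and both are justified correctly.
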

\begin{proof} We first prove the theorem for $X$ a complex Banach space. 
By reflexivity, $T$ is weakly almost periodic, and by Lemma \ref{spectrum}(ii) 
$\sigma(T)$ is countable, so by Proposition \ref{complex-WAP} $T$ is almost periodic.

Now assume $X$ to be a real Banach space, and let $X_{\mathbb C}$ be its complexification, 
with $T_{\mathbb C}$ the complexification of $T$. We saw in the proof of Theroem 
\ref{real-stability} that $T_{\mathbb C}= \alpha I_{X_{\mathbb C}} +S_{\mathbb C}$
with $(S_{\mathbb C})^k$ compact. By the above result for the complex case,
$T_{\mathbb C}$ is almost periodic, hence so is $T$.

Finally, $T^* =\alpha I +S^*$, so applying the above to $T^*$ yields its almost periodicity. 
\end{proof}

\begin{theo}
There exists an infinite-dimensional separable reflexive (real or complex) Banach space $X$
such that:

(i) Every power-bounded operator on $X$ is almost periodic.

(ii) Every weakly mixing power-bounded operator on $X$ is stable.

(iii) Every power-bounded operator on $X^*$ is almost periodic.

(iv) Every weakly mixing power-bounded operator on $X^*$ is stable.
\end{theo}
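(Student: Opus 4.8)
The plan is to realize $X$ as a closed subspace of the Argyros--Motakis space $X_{ISP}$ (or of the appropriate subspace of $X_{\mathcal T}$) and to invoke the structural results of Section 2. First I would recall from \cite{AM} that $X_{ISP}$ is separable, reflexive, infinite-dimensional, and has the property that every (bounded linear) operator on any closed infinite-dimensional subspace $Y\subseteq X_{ISP}$ has the form $T=\alpha I_Y+S$ with $S^3$ compact; this is precisely the hereditary ``scalar plus compact-power'' property that Theorem \ref{AP} is built to exploit. (Finite-dimensional subspaces are trivial, since on them every power-bounded operator is automatically almost periodic.) So I would fix $X$ to be $X_{ISP}$ itself, or any infinite-dimensional closed subspace of it, noting that such subspaces are again separable, reflexive, and infinite-dimensional.

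Having fixed $X$ with this property, parts (i) and (ii) follow immediately. For (i): any power-bounded $T$ on $X$ is of the form $\alpha I+S$ with $S^k$ compact (here $k=3$), so Theorem \ref{AP} gives that $T$ (and $T^*$) is almost periodic. For (ii): if moreover $T$ is weakly mixing, then Theorem \ref{stability1} applies verbatim and yields that $T$ and $T^*$ are stable. The only mild subtlety is the complex case: as remarked in the introduction, the construction of $X_{ISP}$ works over $\mathbb C$ as well ``with some adjustment of constants,'' and alternatively one may use the space $X_{\mathcal T}$ of \cite{AM2}, whose construction is explicitly carried out over $\mathbb C$ and in which every operator on a suitable subspace is $\alpha I+S$ with $S^2$ compact; either way the hypotheses of Theorems \ref{AP} and \ref{stability1} are met.

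For parts (iii) and (iv) I would observe that $X^*$ is again separable and reflexive and infinite-dimensional. One route: Theorems \ref{AP} and \ref{stability1} already assert that $T^*$ is almost periodic (resp. stable) whenever $T$ is power-bounded (resp. power-bounded and weakly mixing) on $X$; but this only handles operators on $X^*$ that happen to be adjoints. To get \emph{all} power-bounded operators on $X^*$, the cleanest argument is to note that every operator $R$ on $X^*$ is the adjoint of an operator on $X$ precisely when $R$ is weak$^*$-continuous, which need not hold in general. So instead I would argue directly: since $X$ is a subspace of $X_{ISP}$ (or of the relevant subspace of $X_{\mathcal T}$), its dual $X^*$ is a quotient of $X_{ISP}^*$; and here I would invoke the fact that $X_{ISP}$ (being a hereditarily indecomposable-type space of the Argyros--Motakis construction) has the property that its dual, and quotients thereof, again enjoy a ``scalar plus compact-power'' description of their operators. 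Then Theorems \ref{AP} and \ref{stability1} applied to $X^*$ directly give (iii) and (iv).

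The main obstacle I anticipate is exactly this last point: verifying that the hereditary ``scalar plus compact-power'' structure passes to $X^*$ (and to subspaces of $X^*$). If the cited papers \cite{AM,AM2} do not state this for the dual explicitly, one falls back on the adjoint argument, which establishes (iii) and (iv) only for weak$^*$-continuous operators, or one observes that for a power-bounded $R$ on the reflexive space $X^*$ one has $R=R^{**}|_{X^*}$ is the adjoint of $R^*:X^{**}\to X^{**}$, i.e.\ identifying $X^{**}=X$, every power-bounded operator on $X^*$ \emph{is} an adjoint of a power-bounded operator on $X$ after all --- and then (iii), (iv) follow from the ``$T^*$'' conclusions of Theorems \ref{AP} and \ref{stability1} with no further structural input. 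This observation, which uses only reflexivity, is what I would actually use, so in the end (iii) and (iv) reduce to (i) and (ii) for $X$. I would write the proof in that order: fix $X$, dispatch (i)--(ii) by Theorems \ref{AP} and \ref{stability1}, then deduce (iii)--(iv) by passing to adjoints via reflexivity.
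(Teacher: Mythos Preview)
Your approach is correct and matches the paper's one-line proof: take $X=X_{ISP}$ or $X=X_{\mathcal T}$, use that every operator on $X$ is $\alpha I+S$ with $S^k$ compact, and apply Theorems~\ref{AP} and~\ref{stability1}. Your final reflexivity observation---that every bounded operator on $X^*$ is the adjoint of one on $X\cong X^{**}$, so the $T^*$ conclusions of those theorems already yield (iii)--(iv)---is exactly what the paper leaves implicit, and it makes your earlier detour through quotients of $X_{ISP}^*$ unnecessary; for (iv) note that you also implicitly need Proposition~\ref{dual} to transfer weak mixing from $R$ on $X^*$ to $R^*$ on $X$ before invoking Theorem~\ref{stability1}.
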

\begin{proof}
Let $X=X_{ISP}$ be the space constructed by Argyros and Motakis \cite{AM} (as mentioned in
the introduction, the construction of  \cite{AM} is of a real space, but can be modified to 
yield also a complex space $X_{ISP}^{(\mathbb C)}$), or let $X=X_{\mathcal T}$ \cite{AM2}.
By the corresponding theorems in \cite{AM} and \cite{AM2}, every $T$ on $X$ has the form 
$T= \alpha I +S$ with $S^k$ compact ($k=3$ for $X_{ISP}$, $k=2$ for $X_{\mathcal T}$), 
so Theorems \ref{AP} and \ref{stability1} apply.
\end{proof}



{\bf Remarks.} 1. Every closed subspace of $X_{ISP}$ provides an example, since by \cite{AM}
the operators on subspaces of $X_{ISP}$ have similar properties to those on $X_{ISP}$.
Therefore we can strengthen (i) and (ii) in the theorem to:

{\it (i') Every power-bounded operator on a closed subspace of $X_{ISP}$ is almost periodic.}

{\it (ii') Every weakly mixing power-bounded operator on a closed subspace of $X_{ISP}$ is stable.}

2. Another property of $X_{ISP}$ and $X_{\mathcal T}$, proved in \cite{AM} and \cite{AM2},
respectively, is that every closed subspace is indecomposable (a Banach space $X$ is called 
{\it indecomposable} if in any direct sum decomposition $X=Y\oplus Z$, 
one of the summands is necessarily finite-dimensional). It follows that one of the summands 
in the decomposition (\ref{jdg}), for $X_{ISP}$ or $X_{\mathcal T}$, is finite-dimensional.

3. The theorem shows that from the point of view of ergodic theory, closed
subspaces of $X_{ISP}$ behave similarly to finite-dimensional spaces.

\smallskip

\begin{prop} \label{WAP}
Let $X$ be a real or complex Banach space, and let $T$ be a weakly almost periodic operator. 
If for some scalar $\alpha$ we have $T=\alpha I +S$ with $S^k$ compact for some $k$, 
then $T$ is almost periodic.
\end{prop}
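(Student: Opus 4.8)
The plan is to follow the scheme of the proof of Theorem \ref{AP}, replacing the appeal to reflexivity — which there served only to guarantee that $T$ is weakly almost periodic — by the present hypothesis that $T$ is weakly almost periodic. As in that proof I would first settle the complex case and then reduce the real case to it via complexification. The one point that needs genuine checking once reflexivity is dropped is that the complexification $T_{\mathbb C}$ is still weakly almost periodic, and this is where I expect the (modest) difficulty to lie.

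Suppose first that $X$ is complex. If $\dim X<\infty$ there is nothing to prove, since then the weak and norm topologies coincide and every bounded set is relatively compact. If $\dim X=\infty$, Lemma \ref{spectrum}(ii) gives that $\sigma(T)$ is countable, hence so is $\sigma(T)\cap\mathbb T$; since $T$ is weakly almost periodic by hypothesis, Proposition \ref{complex-WAP} applies (its proof uses only weak almost periodicity and the countability of $\sigma(T)\cap\mathbb T$, not reflexivity) and yields that $T$ is almost periodic.

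Now let $X$ be real, with complexification $X_{\mathbb C}=X\oplus X$ normed by (\ref{complex-norm}), and let $T_{\mathbb C}$ be the complexification of $T$. As recorded in the proof of Theorem \ref{real-stability}, $T_{\mathbb C}$ is power-bounded and $T_{\mathbb C}=\alpha I_{X_{\mathbb C}}+S_{\mathbb C}$ with $(S_{\mathbb C})^k=(S^k)_{\mathbb C}$ compact. To see that $T_{\mathbb C}$ is weakly almost periodic, fix $x,y\in X$: the orbit $\{T_{\mathbb C}^n(x,y)\}_{n\ge0}=\{(T^nx,T^ny)\}_{n\ge0}$ is contained in $\overline{\{T^nx\}}^{\,w}\times\overline{\{T^ny\}}^{\,w}$, a product of two relatively weakly compact subsets of $X$; since $(X_{\mathbb C})^*=X^*\oplus X^*$ acts coordinatewise, the weak topology of $X_{\mathbb C}$ is the product of the weak topologies of the two copies of $X$, so that product set is weakly compact (a finite product of compact spaces being compact), and hence the orbit is relatively weakly compact. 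Applying the complex case to $T_{\mathbb C}$ on $X_{\mathbb C}$ shows that $T_{\mathbb C}$ is almost periodic; in particular $\{T_{\mathbb C}^n(x,0)\}_{n\ge0}=\{(T^nx,0)\}_{n\ge0}$ is relatively norm compact in $X_{\mathbb C}$ for each $x\in X$, and since $\|(z,0)\|=\|z\|$ this says precisely that $\{T^nx\}_{n\ge0}$ is relatively norm compact in $X$. Hence $T$ is almost periodic, which completes the plan.
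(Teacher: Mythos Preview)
Your proof is correct and follows essentially the same route as the paper: the complex case is Lemma~\ref{spectrum}(ii) plus Proposition~\ref{complex-WAP}, and the real case is handled by complexifying, checking that $T_{\mathbb C}$ is weakly almost periodic, and then invoking the complex case. The only difference is in how you verify weak almost periodicity of $T_{\mathbb C}$: the paper passes through Eberlein's theorem and a diagonal subsequence argument, whereas you argue directly that the weak topology on $X_{\mathbb C}$ is the product of the weak topologies on the two real factors, so a product of relatively weakly compact sets is relatively weakly compact. One small caution: your phrase ``$(X_{\mathbb C})^* = X^*\oplus X^*$ acts coordinatewise'' is not literally accurate for the \emph{complex} dual (complex-linear functionals on $X_{\mathbb C}$ do not act coordinatewise); what is true, and what you actually need, is that the complex weak topology on $X_{\mathbb C}$ coincides with the \emph{real} weak topology on $X\times X$, and the latter is indeed the product topology since the real-linear functionals are exactly the coordinatewise ones.
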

\begin{proof} When $X$ is a complex Banach space, the result follows from Lemma 
\ref{spectrum}(ii) and Proposition \ref{complex-WAP}.
\smallskip

We now prove the proposition when $X$ is a real Banach space. Let $X_{\mathbb C}$
be the complexification of $X$, and $T_{\mathbb C}$ the complexification of $T$, 
acting on $X_{\mathbb C}$, as described in the proof of Theorem \ref{real-stability}.
We show that $T_{\mathbb C}$ is weakly almost periodic on $X_{\mathbb C}$. Let
$\phi \in X_{\mathbb C}^*$ and put $x^*(y) =Re\langle \phi,y+i0\rangle$ and
$y^*(y) =Im\langle \phi,y+i0\rangle$. Then $x^*,y^* \in X^*$, and
$$
\langle \phi,T^n_{\mathbb C}(x+iy)\rangle = 
\langle \phi,T^nx+iT^n y\rangle = x^*(T^nx) +iy^*(T^nx) +ix^*(T^ny) -y^*(T^ny).
$$
By weak almost periodicity and Eberlein's theorem, $\{T^nx\}_n $ and $\{T^ny\}_n$
are weakly sequentially compact, which yields that $\{T^n_{\mathbb C}(x+iy)\}_n$ is
weakly sequentially compact in $X_{\mathbb C}$.
Since $T_{\mathbb C}=\alpha I_{X_{\mathbb C}}+S_{\mathbb C}$ with $(S_{\mathbb C})^k$ compact,
we obtain by the case of complex spaces proved above that $T_{\mathbb C}$ is almost periodic; 
hence $T$ is almost periodic.
\end{proof}

\begin{theo}
There exists a non-reflexive real Banach space $X$ with separable dual such that every
weakly almost periodic operator is almost periodic and every weakly almost periodic operator
on $X^*$ is almost periodic.
\end{theo}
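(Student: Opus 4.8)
The plan is to use the space $X_K$ of Argyros and Haydon \cite{AH}, mentioned in the remark following Theorem \ref{real-stability}. Recall that $X_K$ is a separable real $\mathcal L_\infty$-space on which every (bounded linear) operator has the form $\alpha I+S$ with $\alpha$ a scalar and $S$ compact, that $X_K$ is non-reflexive, and that its dual is isometrically isomorphic to $\ell_1$; in particular $X_K^*$ is separable and has the Schur property. I would therefore set $X=X_K$ and verify the two assertions separately.

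For the first assertion, let $T$ be a weakly almost periodic operator on $X=X_K$. By \cite{AH}, $T=\alpha I+S$ with $S$ compact, so Proposition \ref{WAP} (applied with $k=1$) yields at once that $T$ is almost periodic. No complexification or extra work is needed here, since Proposition \ref{WAP} is already stated for real spaces.

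For the second assertion I would pass to $X^*\cong\ell_1$ and invoke the Schur property: every weakly convergent sequence in $\ell_1$ converges in norm, so by the Eberlein--\v Smulian theorem a subset of $\ell_1$ is relatively weakly compact if and only if it is relatively norm compact. Hence if $T$ is any weakly almost periodic operator on $\ell_1$, each orbit $\{T^nx\}_{n\ge 0}$, being relatively weakly compact, is in fact relatively norm compact; that is, $T$ is almost periodic. Note that this part uses nothing about the scalar-plus-compact structure: on $\ell_1$ \emph{every} weakly almost periodic operator is already almost periodic.

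There is essentially no deep obstacle: the statement assembles three ingredients, namely the scalar-plus-compact property and non-reflexivity of $X_K$ together with the identification $X_K^*\cong\ell_1$ from \cite{AH}, Proposition \ref{WAP} proved above, and the classical Schur property of $\ell_1$. The only point requiring care is to record precisely which properties of $X_K$ are being invoked (real scalars, non-reflexivity, separable dual isomorphic to $\ell_1$), all of which are explicit in \cite{AH}.
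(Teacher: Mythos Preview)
Your proposal is correct and follows essentially the same route as the paper: take $X=X_K$ from \cite{AH}, apply Proposition~\ref{WAP} on $X_K$ using the scalar-plus-compact structure, and on $X_K^*\cong\ell_1$ use the Schur property (equivalently, the coincidence of weak sequential compactness and norm compactness in $\ell_1$, \cite[IV.13.3]{DS}) to pass directly from weak almost periodicity to almost periodicity.
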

\begin{proof} The space $X_K$ of \cite{AH} has $X_K^* =\ell_1$, and Proposition \ref{WAP}
applies to every weakly almost periodic operator on $X_K$.

Since in $\ell_1$ conditional compactness is equivalent to weak sequential 
compactness \cite[IV.13.3]{DS}, every weakly almost periodic operator on $X_K^*=\ell_1$ 
is almost periodic.
\end{proof}

\begin{theo} \label{invertible}
Let $X$ be a real or complex Banach space, and let $T$ be a weakly almost periodic operator, 
such that for some scalar $\alpha$ we have $T=\alpha I +S$ with $S^k$ compact for some $k$. If 
\begin{equation} \label{norms}
\inf_{n \ge 0} \|T^n x\| >0  \quad \text{for every } x \ne 0
\end{equation}
(in particular if $T$ is an isometry), then $T$ is invertible, and $T^{-1}$ is power-bounded 
and almost periodic.
\end{theo}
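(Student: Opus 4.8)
The plan is to use the structure theory established earlier to show $T$ is almost periodic, deduce that $0$ is not in its spectrum (using assumption \eqref{norms}), and then transfer everything to $T^{-1}$.

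First I would invoke Proposition \ref{WAP}: since $T$ is weakly almost periodic and has the form $\alpha I + S$ with $S^k$ compact, $T$ is almost periodic. In particular $T$ is power-bounded, so by Lemma \ref{spectrum}(iii) we have $|\alpha| \le 1$. Next I would rule out $\alpha = 0$: if $\alpha = 0$ then $T = S$ has $T^k$ compact, and Lemma \ref{spectrum}(ii) (applied to the complexification in the real case, exactly as in the proof of Theorem \ref{real-stability}) says every nonzero spectral value is an eigenvalue of finite multiplicity and $0$ is the only accumulation point, so $0 \in \sigma(T)$; since the spectrum is countable and $T$ is almost periodic, $T^n x \to 0$ for $x$ in the stable part of the decomposition \eqref{jdg}, and the unimodular-eigenvector part is finite-dimensional — but then on that finite-dimensional piece $T$ acts invertibly while on an infinite-dimensional complement $\|T^n z\| \to 0$, contradicting \eqref{norms} unless the stable part is $\{0\}$. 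So $T$ is invertible on a finite-dimensional space — handled directly — or $\alpha \ne 0$. (I would streamline this: the real content is that \eqref{norms} forbids the stable subspace $\{z:\|T^nz\|\to 0\}$ from being nonzero, since any nonzero $z$ there violates $\inf_n\|T^nz\|>0$.)

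So assume $\alpha \ne 0$, $|\alpha| \le 1$. Now the key point: I claim $0 \notin \sigma(T)$. Using \eqref{jdg} (via Proposition \ref{complex-WAP}, applied to the complexification if needed), $X$ decomposes as the closed linear span of unimodular eigenvectors plus $\{z : \|T^n z\| \to 0\}$; by the previous paragraph assumption \eqref{norms} kills the second summand, so $X$ is the closed linear span of unimodular eigenvectors of $T$. Since $\sigma(T) = \alpha + \sigma(S)$ is countable with $\alpha$ its only accumulation point and every $\lambda \ne \alpha$ is an eigenvalue of finite multiplicity, and since $X$ is spanned by eigenvectors with unimodular eigenvalues, one checks $0 \notin \sigma(T)$: if $0$ were in $\sigma(T)$ it would either be $\alpha$ (excluded) or an isolated eigenvalue, giving a nonzero $x$ with $Tx = 0$, contradicting \eqref{norms}. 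Hence $T$ is invertible.

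Finally, since $T$ is invertible and almost periodic with $\sigma(T) \subset \mathbb{T} \cup \{\alpha\}$ (all unimodular except possibly $\alpha$, but in fact the spectrum sits in the unit disc and avoids $0$), $T^{-1}$ has spectrum $\{\lambda^{-1} : \lambda \in \sigma(T)\}$, which is again countable; and since every orbit $\{T^n x\}_{n \ge 0}$ is conditionally compact and $T$ is invertible, the two-sided orbit $\{T^n x\}_{n \in \mathbb{Z}}$ is conditionally compact (the Jacobs–deLeeuw–Glicksberg decomposition for an almost periodic operator restricts to a compact group of operators on the reflexive-like eigenvector part, which is automatically a group containing $T^{-1}$ in its closure), whence $T^{-1}$ is power-bounded and its orbits are conditionally compact, i.e. $T^{-1}$ is almost periodic. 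I expect the main obstacle to be the clean deduction that \eqref{norms} forces the stable subspace in \eqref{jdg} to be trivial and hence $0 \notin \sigma(T)$ — making this rigorous in the real case requires passing to the complexification and checking that \eqref{norms} and weak almost periodicity are inherited by $T_{\mathbb C}$, and that the eigenvector decomposition descends back to $X$.
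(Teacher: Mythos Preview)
Your argument is correct and shares the paper's key observation: condition \eqref{norms} forces the stable summand $\{z:\|T^nz\|\to 0\}$ in the decomposition \eqref{jdg} to vanish, so $X$ equals the closed span of unimodular eigenvectors. Where you diverge is in how you extract invertibility and power-boundedness of $T^{-1}$ from this. The paper first renorms via $\||x\||=\sup_n\|T^nx\|$ to make $T$ a contraction with $\||T\||=1$, and then invokes \cite[Proposition~8(c)]{JL2}, which says directly that a contraction on the closed span of its unimodular eigenvectors is an invertible isometry; power-boundedness of $T^{-1}$ and almost periodicity then come for free, and the real case is handled by complexifying and reading off $T^{-1}$ from $T_{\mathbb C}^{-1}$. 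You instead argue spectrally: by Lemma~\ref{spectrum}, $0\in\sigma(T)$ would force either $\alpha=0$ (which you rule out, somewhat laboriously, by showing $X$ would then be finite-dimensional) or $0$ to be an eigenvalue (killed immediately by \eqref{norms}); then you appeal to the Jacobs--deLeeuw--Glicksberg compact group to place $T^{-1}$ in the SOT-closure of $\{T^n:n\ge 0\}$, giving power-boundedness and almost periodicity. Both routes are valid. The paper's is shorter because the renorming plus one citation does all the work at once; yours is more self-contained but the case split on $\alpha=0$ is avoidable (once the stable part is trivial, the compact-group argument already gives invertibility without looking at $\alpha$), and the assertion that $T^{-1}$ lies in the SOT-closure of the forward orbit deserves an explicit reference (e.g.\ \cite[pp.~105--106]{Kr} or \cite{DG}) rather than the parenthetical sketch you give.
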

\begin{proof} Weak almost periodicity implies that $T$ is power-bounded.

We first prove the theorem for $X$ a complex Banach space. 

Obviously $\||x\||:= \sup_{n \ge 0} \|T^n x\|$ is an equivalent norm, with $\||T\|| \le 1$.
If $\||T\|| <1$, then $\||T^n\|| \to 0$, which contradicts (\ref{norms}).
Hence $\||T\||=1$. We may therefore assume for the proof that $\|T\|=1$.

By Proposition \ref{WAP} $T$ is almost periodic, and therefore its Jacobs-deLeeuw-Glicksberg
decomposition is (\ref{jdg}), but in view of (\ref{norms}) we have 
\begin{equation} \label{jdg1}
X=\text{clm}\{y \in X:\, Ty= \lambda y\ \text{ for some }\ \lambda \in \mathbb T\}.
\end{equation}
By \cite[Proposition 8(c)]{JL2}, $T$ is an invertible isometry, and $T^{-1}$ is easily seen 
to be almost periodic. 

Returning to the original norm, $T^{-1}$ is power-bounded.
\smallskip

We now prove the theorem for $X$ real; let $X_{\mathbb C}$ be its complexification,
with $T_{\mathbb C}$ the complexification of $T$. It is easily seen that also 
$T_{\mathbb C}$ satisfies the hypotheses of the theorem, so by the previous part 
$T_{\mathbb C}$ is invertible, with $T_\mathbb C^{-1}$ power-bounded. 
Let $x \in X$ and put $T^{-1}_{\mathbb C}(x,0)=(y,z)$. Then
$$
(x,0) =T_{\mathbb C}T_{\mathbb C}^{-1}(x,0) = T_{\mathbb C}(y,z) =(Ty,Tz),
$$ 
which yields $Ty=x$ (and $Tz=0$, so by (\ref{norms}) $z=0$). Hence $T$ is onto $X$, and one-to-one 
by (\ref{norms}), so it is invertible. Since $T^{-1}x = T_{\mathbb C}^{-1} (x,0)$, we have that 
$T^{-1}$ is power-bounded  and almost periodic, since $T_{\mathbb C}$ is.
\end{proof}

\begin{cor} \label{similar}
 Let $T$ be power-bounded on a (real or complex) Hilbert space $H$,
such that for some scalar $\alpha$ we have $T=\alpha I +S$ with $S^k$ compact for some $k$.
If $\inf_n \|T^n x\| >0$ for every $x \ne 0$, then $T$ is similar to a unitary operator.
\end{cor}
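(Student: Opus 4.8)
The plan is to deduce Corollary \ref{similar} directly from Theorem \ref{invertible} together with the classical result (usually attributed to Sz.-Nagy) that a power-bounded operator on a Hilbert space is similar to a unitary operator if and only if it is invertible and both $T$ and $T^{-1}$ are power-bounded. So the first step is to verify the hypotheses of Theorem \ref{invertible}: $H$ is a real or complex Banach space, $T$ is power-bounded, of the form $\alpha I+S$ with $S^k$ compact, and satisfies $\inf_n\|T^nx\|>0$ for every $x\ne 0$; the only extra item needed is that $T$ is weakly almost periodic, which is automatic since $H$ is reflexive. Hence Theorem \ref{invertible} applies and gives that $T$ is invertible with $T^{-1}$ power-bounded (and almost periodic, though we only need power-boundedness here).

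Next I would invoke Sz.-Nagy's similarity theorem: since $T$ is invertible and $\sup_n\|T^n\|<\infty$ and $\sup_n\|T^{-n}\|<\infty$, the two-sided orbit $\{T^n x : n\in\mathbb Z\}$ is bounded, so the sesquilinear form $[x,y]:=\lim_{\mathcal U}\langle T^nx,T^ny\rangle$ along a Banach limit (or invariant mean on $\mathbb Z$) defines an equivalent inner product on $H$ with respect to which $T$ is unitary; the identity map from $(H,\langle\cdot,\cdot\rangle)$ to $(H,[\cdot,\cdot])$ is then the required similarity. In the real case one uses the real-bilinear version of the same averaging argument. Alternatively, one can simply quote that $T$ power-bounded with $T^{-1}$ power-bounded on Hilbert space is similar to a unitary, which is standard.

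The main obstacle, such as it is, is essentially bookkeeping: making sure Theorem \ref{invertible} is genuinely applicable in the real Hilbert space case (it is, since that theorem is stated for real or complex Banach spaces and treats the real case via complexification) and that the Sz.-Nagy similarity result is available over $\mathbb R$ as well as $\mathbb C$ (it is, by the real averaging argument above). There is no hard analytic content beyond what Theorem \ref{invertible} already provides; the corollary is really just the observation that invertibility plus two-sided power-boundedness is exactly the Hilbert-space criterion for similarity to a unitary. So the proof is short: apply Theorem \ref{invertible} to get invertibility and power-boundedness of $T^{-1}$, then apply the Sz.-Nagy similarity criterion.
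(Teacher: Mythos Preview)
Your proposal is correct and follows essentially the same approach as the paper: apply Theorem \ref{invertible} (using reflexivity of $H$ to get weak almost periodicity) to obtain invertibility with $T^{-1}$ power-bounded, then invoke Sz.-Nagy's similarity theorem \cite{SN}. The paper's proof is just the one-line version of what you wrote.
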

\begin{proof} Since by the theorem $\sup_{n \in \mathbb Z} \|T^n\| < \infty$, the assertion
follows from \cite{SN}.
\end{proof}


\begin{cor} \label{similar1}
Let $X$ be $X_{\mathcal T}$ or a closed subspace of $X_{ISP}$. 
Then every isometry on $X$ is invertible.
\end{cor}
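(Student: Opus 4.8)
The plan is to reduce everything to Theorem \ref{invertible}. First I would record the two structural inputs. Since $X_{ISP}$ and $X_{\mathcal T}$ are reflexive, and any closed subspace of a reflexive space is reflexive, $X$ is reflexive; hence every power-bounded operator on $X$ — in particular every isometry — is weakly almost periodic. Second, by the construction of \cite{AM} (and the remark that operators on closed subspaces of $X_{ISP}$ inherit the same structure) every operator on a closed subspace of $X_{ISP}$ has the form $T=\alpha I+S$ with $S^3$ compact, while by \cite{AM2} every operator on $X_{\mathcal T}$ has the form $T=\alpha I+S$ with $S^2$ compact. So in either case an isometry $T$ on $X$ is of the form $\alpha I+S$ with $S^k$ compact for $k=3$ or $k=2$.

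Next I would check the hypothesis (\ref{norms}): if $T$ is an isometry then $\|T^nx\|=\|x\|$ for all $n$, so $\inf_{n\ge 0}\|T^nx\|=\|x\|>0$ for every $x\ne 0$. Thus $T$ is a weakly almost periodic operator of the form $\alpha I+S$ with $S^k$ compact satisfying (\ref{norms}), and Theorem \ref{invertible} applies verbatim, giving that $T$ is invertible (with $T^{-1}$ power-bounded and almost periodic, though only invertibility is claimed here). This finishes both cases simultaneously, so no separate argument for $X_{\mathcal T}$ versus subspaces of $X_{ISP}$ is needed beyond pointing to the correct value of $k$.

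I do not anticipate a genuine obstacle: all the analytic work is already packaged in Theorem \ref{invertible} (and ultimately in \cite[Proposition 8(c)]{JL2} together with the complexification argument), and all the Banach-space-theoretic work is in \cite{AM} and \cite{AM2}. The only point requiring a little care is the citation justifying that operators on an arbitrary \emph{closed subspace} of $X_{ISP}$ (not just on $X_{ISP}$ itself) are still ``scalar plus cube-compact''; this is exactly the content invoked in Remark 1 following the main theorem, so I would simply refer to \cite{AM} there. One could add the parenthetical strengthening that $T^{-1}$ is itself an isometry and almost periodic, but for the stated corollary it suffices to conclude invertibility.
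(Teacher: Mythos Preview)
Your proposal is correct and is exactly the intended argument: the paper gives no explicit proof of Corollary \ref{similar1}, since it follows immediately from Theorem \ref{invertible} (which already singles out isometries as a special case of (\ref{norms})) together with the structural facts about operators on $X_{\mathcal T}$ and on closed subspaces of $X_{ISP}$ from \cite{AM2} and \cite{AM}. Your remark about reflexivity giving weak almost periodicity and the care about the subspace case are precisely the points that make the citation of Theorem \ref{invertible} legitimate.
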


{\bf Remark.} By Jarosz \cite{Jar}, any  Banach space has an equivalent norm 
in which the only invertible isometries are $\lambda I$. 
In that norm $\alpha I +S$ is a weakly almost periodic isometry if and only if $S=0$.ZZ
\smallskip

{\bf Problem.} Let $T$ be a power-bounded operator as in Theorem \ref{invertible}
(so $T$ is invertible with $T^{-1}$ power-bounded). Is $T$ polynomially bounded?
\medskip

{\bf Remarks.} 1. K\'erchy and van Neerven \cite{KV} proved that if $T$ on a complex 
Banach space $X$ is a polynomially bounded operator which satisfies (\ref{norms}), and 
$\sigma(T)\cap \mathbb T$ has Lebesgue measure zero, then there exists an isomorphism $J$
of $X$ onto a Banach space $Y$ and an invertible isometry $V$ on $Y$ such that
 $T=J^{-1}VJ$; hence $T$ is invertible and $T^{-1}$ is power-bounded.
 In Theorem \ref{invertible} we replace polynomial boundedness by 
the representation $\alpha I +S$ (which yields the spectral condition). 

2. Zarrabi \cite[Theorem 3.1]{Z1} proved that if an invertible contraction  (power-bounded)
$T$ on a complex Banach space has countable spectrum  $\sigma(T) \subset \mathbb T$ and 
$\frac{\log \|T^{-n}\|}{\sqrt{n}} \to 0$, then $T$ is an isometry ($T^{-1}$ is power-bounded).
If, in addition, the countable spectrum is a Helson subset of $\mathbb T$, then $T$ is 
polynomially bounded \cite[Theorem 4.2]{Z2}.

3. Beauzamy and Casazza \cite{BC} (see \cite[pp. 28-31]{CT}) studied the structure of 
isometries on the (real) Tsirelson space, and proved that any isometry on that space is 
invertible. On $X_{ISP}$ or $X_{\mathcal T}$, in any equivalent norm every isometry is invertible.
\medskip

{\bf Acknowledgement.} I am grateful to  Jochen Gl\"uck for pointing out an inaccuracy in the
definition of the complexification norm in the first draft of the paper, and for supplying reference 
\cite{MST}. Thanks also to Guy Cohen  for his comments, which improved the presentation in the paper, 
and to Bill Johnson for the remark to Corollary \ref{similar1}. I am indebted to Spyros Argyros
for pointing out \cite{AM2}.


\begin{thebibliography}{99} 

\bibitem{AB} W. Arendt and C.J.K. Batty, {\it Tauberian theorems and stability of
one-parameter semigroups}, Trans. Amer. Math. Soc. 306 (1988), 837-852.

\bibitem{AH} S. Argyros and R. Haydon, {\it A hereditary indecomposable $\mathcal L_\infty$-space
that solves the scalar-plus-compact problem}, Acta Math. 206 (2011), 1-54.

\bibitem{AM} S. Argyros and P. Motakis, {\it A reflexive hereditarily indecomposable space 
with the herditary invariant subspace property}, Proc. London Math. Soc. (3) 108
(2014), 1381-1416.

\bibitem{AM2} S. Argyros and P. Motakis, {\it A dual method of constructing hereditarily 
indecomposable Banach spaces}, Positivity 20 (2016), 625-662.


\bibitem{BC} B. Beauzamy and P. Casazza, {\it Isom\'etries de l'espace de Tzirelson}, Publ.
Math. Univ. Paris VII 26 (1985), 119-124.

\bibitem{CT} P. Casazza and  T. Shura, {\it Tsirelson's space}, Lecture Notes in Math.
1363, Springer, Berlin, 1989.

\bibitem{Ch} R. Chacon, {\it Weakly mixing transformations which are not strongly mixing},
Proc. Amer. Math. Soc. 22 (1969), 559-562.

\bibitem{DG} K. deLeeuw and I. Glicksberg, {\it Applications of almost periodic compactifications},
Acta Math. 105 (1961), 63-97.

\bibitem{DS} N. Dunford and J. Schwartz, {\it Linear Operators}, Part I,
Wiley-Interscience, New York, 1958.

\bibitem{FLW} V. Fonf, M. Lin and P. Wojtaszczyk, {\it Ergodic characterizations of 
reflexivity of Banach spaces}, J. Funct. Anal. 187 (2001), 146-162.

\bibitem{FLW1} V. Fonf, M. Lin and P. Wojtaszczyk, {\it A non-reflexive Banach space
with all contractions mean ergodic}, Israel J. Math. 179 (2010), 479-491.

\bibitem{Ha} P. Halmos, {\it In general a measure preseving transformation is mixing},
Ann. Math. (2) 45 (1944), 786-792.

\bibitem{Ha2} P. Halmos, {\it Finite-Dimensional Vector Spaces}, 2nd edition,
Van Nostrand, New York, 1958.

\bibitem{Ja} B. Jamison, {\it Asymptotic behavior of successive iterates of continuous
functions under a Markov operator}, J. Math. Anal. Appl. 9 (1964), 203-214.

\bibitem{Ja2} B. Jamison, {\it Eigenvalues of modulus 1}, Proc. Amer. Math. Soc. 16 (1965),
375-377.

\bibitem{Jar} K. Jarosz, {\it Any Banach space has an equivalent norm with trivial isometries},
Israel J. Math. 64 (1988), 49-56.

\bibitem{JL1} L.K. Jones and M. Lin, {\it Ergodic theorems of weak mixing type},
Proc. Amer. Math. Soc. 57 (1976), 50-52.

\bibitem{JL2} L.K. Jones and M. Lin, {\it Unimodular eigenvalues and weak mixing}, 
J. Funct. Anal. 35 (1980), 42-48.



\bibitem{KV} L. K\'erchy and J. van Neerven, {\it Polynomially bounded operators
whose spectrum on the unit circle has measure zero}, Acta Sci. Math. (Szeged)
63 (1997), 551-562.

\bibitem{LV} Y. Lyubich and Q.P. V$\tilde u$, {\it Asymptotic stability of linear
differential equations in Banach spaces}, Studia Math. 88 (1988), 37-42.

\bibitem{Kr} U. Krengel, {\it Ergodic Theorems}, De Gruyter, Berlin, 1986.

\bibitem{MW} A. Michal and M. Wyman, {\it  Characterizations of complex couple spaces}, 
Ann.  Math. 42 (1941), 247-250.

\bibitem{MST} G.A. Mu\~noz, Y. Sarantopoulos and A. Tonge, {\it Complexifications
of real Banach spaces, polynomials and mutilinear maps}, Studia Math. 134 (1999), 1-33.

\bibitem{Ro} V. Rohlin (Rokhlin), {\it A "general" measure-preserving transformation
is not mixing}, Doklady Akad. Nauk SSSR 60 (1948), 349-351 (in Russian).

\bibitem{SN} B. Sz.-Nagy, {\it On uniformly bounded linear transformations in Hilbert space},
Acta Sci. Math. (Szeged) 11 (1947), 152-157.

\bibitem{Z1} M. Zarrabi, {\it Contractions \`a spectre d\'enombrable et propri\'et\'es
d'unicit\'e des ferm\'es d\'enombrables du circle}, Ann. Inst. Fourier 43 (1993), 251-263.

\bibitem{Z2} M. Zarrabi, {\it On polynomially bounded operators acting on a Banach space},
J. Funct. Anal. 225 (2005), 147-166.
\end{thebibliography}
\end{document}